\documentclass[12pt]{article}
\usepackage{amsmath}
\allowdisplaybreaks[4]
\usepackage{tikz}
\usetikzlibrary{decorations.pathreplacing, matrix}
\usetikzlibrary{decorations.pathreplacing}
\usepackage[all]{xy}
\usepackage{amssymb}
\usepackage{amsthm}
\usepackage{hyperref}
\usepackage{bm}
\hypersetup{colorlinks=true,linkcolor=blue,citecolor=red}
\usepackage{amsmath}
\usepackage{amscd}%,enumitem
\usepackage{verbatim}
\usepackage{eurosym}
\usepackage{float}
\usepackage{color}
\usepackage{dcolumn}
\usepackage[mathscr]{eucal}
\usepackage[all]{xy}
\usepackage{hyperref}
\usepackage{mathrsfs}
\usepackage{amsmath}
\usepackage{amssymb}
\usepackage{amsfonts,ifpdf}
\usepackage{graphicx}
\usepackage{times}
\usepackage{float}
\usepackage{epstopdf}
\usepackage{cite}
\usepackage{youngtab}
\usepackage{ytableau}
\ytableausetup
{mathmode, boxsize=0.9em}
\usepackage{indentfirst}

\setlength{\evensidemargin}{0.3cm}
\setlength{\oddsidemargin}{1.5cm}
\parskip=6pt
\frenchspacing
\textwidth=15cm
\textheight=23cm
\parindent=16pt
\oddsidemargin=0.5cm
\evensidemargin=0.5cm
\topmargin=-1.2cm

\newtheorem{theorem}{Theorem}[section]
\newtheorem{lemma}[theorem]{Lemma}

\newtheorem{coro}[theorem]{Corollary}

\theoremstyle{definition}
\newtheorem{defi}[theorem]{Definition}

\theoremstyle{remark}

\makeatletter \@addtoreset{equation}{section} \makeatother
\makeindex \setcounter{tocdepth}{2}

\def\rk{\operatorname{rk}}

\renewcommand\ell{l}

\def\det{\mbox{\rm det}}
\def\And{\mbox{\rm ~and~}}

%-------------------------------------------------------------
\begin{document}
\begin{center}
{\Large\bf
A decomposition of Grassmannian associated with a hyperplane arrangement
}\\ [7pt]
\end{center}
\vskip 3mm
\begin{center}
Houshan Fu$^1$,  Weikang Liang$^{2,*}$ and Suijie Wang$^3$\\[8pt]

$^1$School of Mathematics and Information Science\\
Guangzhou University\\
Guangzhou 510006, Guangdong, P. R. China\\[15pt]
		
$^{2,3}$School of Mathematics, $^3$Greater Bay Area Institute for Innovation\\
Hunan University\\
Changsha 410082, Hunan, P. R. China\\[15pt]
				
$^*$Correspondence to be sent to: kangkang@hnu.edu.cn  \\
Emails: $^1$fuhoushan@gzhu.edu.cn, $^3$wangsuijie@hnu.edu.cn\\[15pt]
\end{center}
\vskip 3mm
%-------------------------------------------------------------
\begin{abstract}
The Grassmannian, which is the manifold of all $k$-dimensional subspaces in the Euclidean space $\mathbb{R}^n$, was decomposed through three equivalent methods connecting combinatorial geometries, Schubert cells and convex polyhedra by Gelfand, Goresky, MacPherson and Serganova. Recently, Liang, Wang and Zhao discovered a novel decomposition of the Grassmannian via an essential hyperplane arrangement, which generalizes the first two methods. However, their work was confined to essential hyperplane arrangements. Motivated by their research, we extend their results to a general hyperplane arrangement $\mathcal{A}$, and demonstrate that the $\mathcal{A}$-matroid, the $\mathcal{A}$-adjoint and the refined $\mathcal{A}$-Schubert decompositions of the Grassmannian are consistent. As a byproduct, we provide a classification for $k$-restrictions of $\mathcal{A}$ related to all $k$-subspaces through two equivalent methods: the $\mathcal{A}$-matroid decomposition and the $\mathcal{A}$-adjoint decomposition. 
\vskip 3pt
\noindent
{\bf Mathematics Subject Classification:} 52C35, 05B35
\\ [3pt]
{\bf Keywords:} Hyperplane arrangement, adjoint arrangement, Grassmannian
\end{abstract}
\section{Introduction}
The {\em Grassmannian} $\mathrm{Gr}(k, n)$  is the manifold of all $k$-dimensional linear subspaces in the Euclidean space $\mathbb{R}^n$. Gelfand, Goresky, MacPherson and Serganova \cite{Gelfand-Goresky-MacPherson-Serganova1987} introduced three equivalent approaches to decompose the Grassmannian, which connect the geometry of Schubert cells in the Grassmannian manifold, the theory of combinatorial geometries (matroid theory), and the theory of convex polyhedra. Inspired by their work, Liang, Wang and Zhao \cite{Liang-wang-zhao} recently discovered a decomposition of the Grassmannian through an essential hyperplane arrangement $\mathcal{A}$, which can reduce to the first two approaches when considering $\mathcal{A}$ as the Boolean arrangement. However, their analysis was confined to essential hyperplane arrangements. Building on their work, our main goal in this paper is to extend their main results to general hyperplane arrangements.

To be more precise, a {\em hyperplane arrangement} $\mathcal{A}$ is a finite collection of hyperplanes in the Euclidean space $\mathbb{R}^n$. Throughout this paper, we assume that $\mathcal{A}$ is {\em linear}, i.e., all the hyperplanes in $\mathcal{A}$ pass through the origin $o$. Let $T$ denote the intersection $T:=\bigcap_{H\in\mathcal{A}}H$. In particular, $\mathcal{A}$ is {\em essential} if $T=\{o\}$. The {\em intersection lattice} $L(\mathcal{A})$ is a poset consisting of all nonempty intersections of some hyperplanes in $\mathcal{A}$, ordered by reverse inclusion, and including the whole space $\mathbb{R}^n=\bigcap_{H\in\emptyset}H$ as the (unique) minimal element. It is well known that $L(\mathcal{A})$ is a geometric lattice and its {\em rank function} is given by 
\[
\rk(X):=n-\dim(X),\quad\forall\, X\in L(\mathcal{A}).
\]
Especially, we call $r(\mathcal{A}):=\rk(T)$ the {\em rank} of $\mathcal{A}$.  Moreover, we call an $(n-k)$-dimensional element in $L(\mathcal{A})$ a {\em $k$-flat} of $\mathcal{A}$. Naturally, the poset $L(\mathcal{A})$ can be decomposed into $r(\mathcal{A})+1$ parts, with every part $L_{k}(\mathcal{A})$ consisting of all $k$-flats
\[L_{k}(\mathcal{A}):=\big\{X \in L(\mathcal{A})  \mid \rk(X)=k\big\},\quad k=0,1,\ldots,r(\mathcal{A}).\]

Extending the restriction operator of the hyperplane arrangement $\mathcal{A}$ to an arbitrary subspace $U$ of $\mathbb{R}^n$, the {\em restriction} of $\mathcal{A}$ to $U$ is the hyperplane arrangement in $U$ defined by
\[
\mathcal{A}|_{U}:=\{H \cap U\mid  H \in \mathcal{A}\And U\not\subseteq H\}.
\]
When $U$ has dimension $k$, $\mathcal{A}|_{U}$ is referred to as a {\em$k$-restriction} of $\mathcal{A}$. A more general setting and background on hyperplane arrangements can be found in the literature \cite{Orlik-Terao,Stanley}.

Let $[n]:=\{1,2,\ldots,n\}$ and ${[n]\choose k}:=\big\{I\subseteq [n]\mid \#I=k\big\}$. We consider $\mathbb{R}^{[n]\choose k}$ as the $n\choose k$-dimensional vector space over $\mathbb{R}$ with the corresponding components of its every vector indexed by the $k$-subsets $I\in {[n]\choose k}$. A subspace $U\in \mathrm{Gr}(k, n)$ can be specified as the row space of a $k\times n$ real matrix $A$, known as a {\em matrix representative} of $U$. It is well known that a $k$-dimensional subspace $U$ corresponds to a unique Pl\"ucker coordinate $\Delta(U)$ up to a nonzero scalar multiple. Specifically, associated with a  matrix representative $A$ of $U$, $\Delta(U):=\big(\Delta_{I}(U)\big)_{I\in{\binom{[n]}{k}}}$ is a vector in $\mathbb{R}^{[n]\choose k}$ whose every component $\Delta_{I}(U)$ is the $k\times k $ minor of $A$ formed from columns indexed by the $k$-subset $I$. 

It is worth remarking that the Pl\"ucker coordinate allows every $k$-dimensional subspace in $\mathbb{R}^n$ to define a hyperplane in $\mathbb{R}^{[n]\choose k}$. Recently, Liang, Wang and Zhao \cite{Liang-wang-zhao} observed this phenomenon and introduced the $k$-adjoint of $\mathcal{A}$ through its $k$-flats, extending the usual adjoint arrangement proposed by Bixby and Coullard \cite{Bixby-Coullard1988}.
\begin{defi}\label{k-adjoint-def}
{\rm 
Let $\mathcal{A}$ be a hyperplane arrangement in $\mathbb{R}^n$, $X\in L_{k}(\mathcal{A})$ and $I\in {[n]\choose k}$ for some $k=0,1,\ldots,n$. Setting
\[a_{I}(X)=(-1)^{\frac{1}{2}k(k+1) + \sum_{i\in I} i}\Delta_{[n]-I}(X),\]
we define the \emph{adjoint} $H(X)$ of $X$ as a hyperplane in $\mathbb{R}^{{[n]}\choose{k}}$ by
\[
H(X)=\left\{({x}_I)_{I\in{\binom{[n]}{k}}}\middle|\sum_{I\in{\binom{[n]}{k}}}a_I(X)\cdot {x}_I=0\right\}.
\]
Furthermore, the hyperplane arrangement $\mathcal{A}^{(k)}$ in $\mathbb{R}^{{[n]}\choose{k}}$ consists of the adjoints of all $k$-flats of $\mathcal{A}$, i.e.,
\[
\mathcal{A}^{(k)}=\big\{H(X) \mid X\in L_{k}(\mathcal{A})\big\},
\]
which is referred to as the \emph{$k$-adjoint} of $\mathcal{A}$. In particular, $\mathcal{A}^{(n)}$ is the empty arrangement.
}
\end{defi}
It is worth pointing out that when $\mathcal{A}$ is essential, there have been several important works. In particular, taking $k=n-1$ in Definition \ref{k-adjoint-def}, $\mathcal{A}^{(n-1)}$ is the usual adjoint arrangement. In this context, Fu and Wang \cite[Theorem 3.2]{FW2023} provided a unified classification of one-element extensions of $\mathcal{A}$ and its restrictions connecting all hyperplanes using the intersection lattice of $\mathcal{A}^{(n-1)}$. We extend this result to general subspaces and present a classification of all restrictions of a general hyperplane arrangement $\mathcal{A}$ regarding all $k$-dimensional subspaces.  The classification is achieved through the $\mathcal{A}$-matroid decomposition of the Grassmannian in Theorem \ref{Classification}, and through the $\mathcal{A}$-adjoint decomposition of the Grassmannian in Corollary \ref{Classification1}. When $\mathcal{A}$ is essential, this classification originated from \cite[Theorem 1.2]{Liang-wang-zhao}, and can also be described by the $k$-adjoint arrangement of $\mathcal{A}$. Recently, Liang, Wang and Zhao \cite{Liang-wang-zhao} introduced the adjoint decomposition of the Grassmannian $\mathrm{Gr}(k, n)$ via the $k$-adjoint arrangement, and then extended the classical Schubert and matroid decompositions of the Grassmannian $\mathrm{Gr}(k, n)$ to refined $\mathcal{A}$-Schubert and $\mathcal{A}$-matroid decompositions. Furthermore, they demonstrated that these three different methods lead to the same decomposition of the Grassmannian.
\begin{theorem}[\cite{Liang-wang-zhao}, Theorem 1.2]\label{LZW2025}
Let $\mathcal{A}$ be an essential hyperplane arrangement in $\mathbb{R}^n$. Then, the $\mathcal{A}$-matroid decomposition, the $\mathcal{A}$-adjoint decomposition and the refined $\mathcal{A}$-Schubert decomposition are exactly the same decomposition of the Grassmannian.
\end{theorem}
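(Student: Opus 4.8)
The plan is to show that each of the three decompositions partitions $\mathrm{Gr}(k,n)$ by one and the same combinatorial invariant of a subspace $U$: the family of $k$-flats of $\mathcal{A}$ transverse to $U$,
\[
\mathcal{B}_{\mathcal{A}}(U):=\bigl\{\,X\in L_{k}(\mathcal{A})\ \bigm|\ U\oplus X=\mathbb{R}^{n}\,\bigr\}=\bigl\{\,X\in L_{k}(\mathcal{A})\ \bigm|\ U\cap X=\{o\}\,\bigr\},
\]
the second description being a consequence of $\dim U+\dim X=k+(n-k)=n$. First I would connect $\mathcal{B}_{\mathcal{A}}(U)$ to the $k$-adjoint, and then, separately, to the $\mathcal{A}$-matroid data and to the refined $\mathcal{A}$-Schubert data; chaining the identifications then yields the theorem.

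The bridge to the $k$-adjoint is a Laplace (cofactor) expansion. If $A$ is a $k\times n$ matrix representative of $U$ and $B$ an $(n-k)\times n$ matrix representative of an $(n-k)$-dimensional subspace $X$, then expanding $\det\!\left(\begin{smallmatrix}A\\ B\end{smallmatrix}\right)$ along its first $k$ rows gives
\[
\det\begin{pmatrix}A\\ B\end{pmatrix}=\sum_{I\in\binom{[n]}{k}}(-1)^{\frac{1}{2}k(k+1)+\sum_{i\in I}i}\,\Delta_{I}(U)\,\Delta_{[n]-I}(X)=\sum_{I\in\binom{[n]}{k}}a_{I}(X)\,\Delta_{I}(U),
\]
the last step being an identity precisely because of the sign normalisation built into Definition~\ref{k-adjoint-def}. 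Consequently, for $X\in L_{k}(\mathcal{A})$ we have $\Delta(U)\in H(X)$ if and only if $\det\!\left(\begin{smallmatrix}A\\ B\end{smallmatrix}\right)=0$, that is, $U+X\neq\mathbb{R}^{n}$, that is, $X\notin\mathcal{B}_{\mathcal{A}}(U)$. Hence the set of hyperplanes of $\mathcal{A}^{(k)}$ passing through $\Delta(U)$ is exactly $L_{k}(\mathcal{A})\setminus\mathcal{B}_{\mathcal{A}}(U)$, so the $\mathcal{A}$-adjoint decomposition is the partition of $\mathrm{Gr}(k,n)$ according to $U\mapsto\mathcal{B}_{\mathcal{A}}(U)$.

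Next I would show that the $\mathcal{A}$-matroid of $U$ — equivalently, the combinatorial type of the restriction $\mathcal{A}|_{U}$ — is also recorded by $\mathcal{B}_{\mathcal{A}}(U)$. Since $\mathcal{A}$ is essential, this matroid has rank $k$; a $k$-subset $S\subseteq\mathcal{A}$ is a basis exactly when $U\cap\bigcap_{H\in S}H=\{o\}$, in which case a dimension count forces $\bigcap_{H\in S}H\in L_{k}(\mathcal{A})$, and this flat then belongs to $\mathcal{B}_{\mathcal{A}}(U)$; conversely, every $X\in\mathcal{B}_{\mathcal{A}}(U)$ is of this form, since $L(\mathcal{A})$ is geometric. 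As a matroid is determined by its bases and the assignment $S\mapsto\bigcap_{H\in S}H$ depends only on $\mathcal{A}$, the $\mathcal{A}$-matroid of $U$ and $\mathcal{B}_{\mathcal{A}}(U)$ determine each other, so the $\mathcal{A}$-matroid decomposition coincides with the $\mathcal{A}$-adjoint decomposition.

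Finally I would bring in the refined $\mathcal{A}$-Schubert decomposition via its description in terms of maximal chains of $L(\mathcal{A})$: since $r(\mathcal{A})=n$, such a chain is a complete flag of flats $\mathbb{R}^{n}=X_{0}\supsetneq X_{1}\supsetneq\cdots\supsetneq X_{n}=\{o\}$, and the $\mathcal{A}$-Schubert position of $U$ along it is the jump set $\{\,j\in[n]\mid\dim(U\cap X_{j-1})>\dim(U\cap X_{j})\,\}$. As each $X_{j}$ is a flat of $\mathcal{A}$, the integer $\dim(U\cap X_{j})$ — equal to $k$ minus the rank in $\mathcal{A}|_{U}$ of the set of hyperplanes of $\mathcal{A}$ containing $X_{j}$ — is determined by the $\mathcal{A}$-matroid of $U$, so every $\mathcal{A}$-Schubert cell is a union of $\mathcal{A}$-matroid cells; hence the refined $\mathcal{A}$-Schubert decomposition refines the $\mathcal{A}$-matroid one. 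For the reverse, if $U$ and $U'$ share the same $\mathcal{A}$-Schubert position along \emph{every} maximal chain, then, embedding an arbitrary flat $Y$ into a maximal chain and reading $\dim(U\cap Y)$ off the corresponding jump set, one obtains $\dim(U\cap Y)=\dim(U'\cap Y)$ for every $Y\in L(\mathcal{A})$, so $U$ and $U'$ have the same $\mathcal{A}$-matroid; thus the two decompositions agree, completing the chain of identifications. I expect this last step — the arrangement analogue of the Gelfand--Goresky--MacPherson--Serganova equality of matroid strata with refined thin Schubert cells — to be the main obstacle, as it requires controlling the Schubert position uniformly over all maximal chains of flats. Throughout, essentiality is used crucially: it guarantees $L_{k}(\mathcal{A})\neq\emptyset$ so that $\mathcal{A}^{(k)}$ is a genuine arrangement, it gives the $\mathcal{A}$-matroid full rank $k$, and it makes maximal chains of $L(\mathcal{A})$ complete flags — hypotheses that fail for a general arrangement and that the present paper must circumvent.
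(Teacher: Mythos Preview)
Your proposal is correct and mirrors the paper's proof of Theorem~\ref{main} (which is proved in full generality and specializes to Theorem~\ref{LZW2025} when $T=\{o\}$): the pivot is the same invariant $\mathcal{B}_{\mathcal{A}}(U)=L_U(\mathcal{A})$ of transverse $k$-flats, your Laplace expansion is Lemma~\ref{lem-2.1}, your basis characterization is the essential case of Lemma~\ref{coro-2}, and the Schubert--matroid link rests on the rank formula of Lemma~\ref{lem-2}. The only cosmetic difference is in the Schubert$\Rightarrow$matroid step, where you read $\dim(U\cap Y)$ for an arbitrary flat $Y$ directly from the jump set of a maximal chain through $Y$, whereas the paper instead identifies $L_U(\mathcal{A})$ as the set of elements $F_{n-k}$ occurring in maximal chains whose jump set is $\{n-k+1,\ldots,n\}$.
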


In this paper,  we primarily generalize the above result to general hyperplane arrangements. Detailed definitions of the three decompositions---namely, the $\mathcal{A}$-matroid decomposition, the $\mathcal{A}$-adjoint decomposition and the refined $\mathcal{A}$-Schubert decomposition---are presented in Section \ref{Sec-2}.
\begin{theorem}\label{main}
Let $\mathcal{A}$ be a hyperplane arrangement $\mathcal{A}$ in $\mathbb{R}^n$. Then, the $\mathcal{A}$-matroid decomposition, the $\mathcal{A}$-adjoint decomposition and the refined $\mathcal{A}$-Schubert decomposition are exactly the same decomposition of the Grassmannian.
\end{theorem}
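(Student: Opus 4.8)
The plan is to reduce the general case to the essential case of Theorem~\ref{LZW2025} by exploiting the product structure forced by the translation subspace $T=\bigcap_{H\in\mathcal{A}}H$. Write $n=\dim T + m$ where $m=r(\mathcal{A})$, and fix a direct sum decomposition $\mathbb{R}^n = T \oplus T^{\perp}$. Every hyperplane $H\in\mathcal{A}$ contains $T$, so $H = T\oplus (H\cap T^{\perp})$, and the collection $\mathcal{A}' := \{H\cap T^{\perp}\mid H\in\mathcal{A}\}$ is an essential arrangement in $T^{\perp}\cong\mathbb{R}^m$. The intersection lattices satisfy $L(\mathcal{A})\cong L(\mathcal{A}')$ via $X\mapsto X\cap T^{\perp}$, preserving rank, and in particular the $k$-flats of $\mathcal{A}$ are exactly $T\oplus Y$ for $Y$ an $(m-?)$-flat of $\mathcal{A}'$. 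First I would make this correspondence precise at the level of each of the three pieces of data attached to a subspace $U\in\mathrm{Gr}(k,n)$.

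Second, for each of the three decompositions I would check that the datum assigned to $U$ is governed only by the ``essential part'' of $U$. For the refined $\mathcal{A}$-Schubert decomposition this should be the most transparent: the relevant Schubert-type stratification sees $U$ through its intersection pattern with the flats of $\mathcal{A}$, and since every flat contains $T$, this pattern is detected after projecting modulo $T$ (or, dually, after intersecting with $T^{\perp}$-related subspaces). For the $\mathcal{A}$-matroid decomposition, the matroid data of $U$ relative to $\mathcal{A}$ is the analogue of the ``which Plücker-type coordinates vanish'' data, and one must show the flats containing $T$ contribute trivially/uniformly, so the stratum of $U$ depends only on the induced configuration in the quotient. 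For the $\mathcal{A}$-adjoint decomposition, I would show that the adjoint hyperplanes $H(X)$ for $X\in L_k(\mathcal{A})$, restricted to the locus of Plücker coordinates actually hit by $\mathrm{Gr}(k,n)$, are pulled back from the adjoint hyperplanes of $\mathcal{A}'$ under the natural map induced by $U\mapsto (U+T)/T$ together with $U\cap T$; the key algebraic point is that $a_I(X)=0$ unless the index set $[n]-I$ is ``compatible with $T$'', which cuts the adjoint arrangement down to the essential one.

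Third, having established all three reductions, I would invoke Theorem~\ref{LZW2025} for the essential arrangement $\mathcal{A}'$ in $\mathbb{R}^m$ to conclude that the three decompositions of $\mathrm{Gr}(\cdot,\cdot)$ it induces coincide, and then transport this equality back along the reduction maps. A subtlety here is that $U\in\mathrm{Gr}(k,n)$ does not simply correspond to a subspace of $T^{\perp}$: the pair $(U\cap T,\ (U+T)/T)$ together with the ``twisting'' between them carries extra data, so one is really fibering $\mathrm{Gr}(k,n)$ over a union of Grassmannians of the quotient. I would argue that all three decompositions are constant along these fibers over each essential stratum — i.e. the subspace $U\cap T$ and the twisting play no role — which is exactly where the hypothesis ``all hyperplanes contain $T$'' is used three times in parallel.

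I expect the main obstacle to be the $\mathcal{A}$-adjoint part: one must carefully match the sign conventions and the index bookkeeping in Definition~\ref{k-adjoint-def} (the factor $(-1)^{\frac12 k(k+1)+\sum_{i\in I}i}$ and the passage $I\mapsto[n]-I$) between the ambient arrangement in $\mathbb{R}^{\binom{[n]}{k}}$ and the essential arrangement in $\mathbb{R}^{\binom{[m]}{k'}}$, and verify that the vanishing pattern of the $a_I(X)$ really does collapse $\mathcal{A}^{(k)}$ onto a copy of $(\mathcal{A}')^{(k')}$ pulled back along a coordinate projection. Once this linear-algebraic compatibility is nailed down, the remaining verifications that the matroid and Schubert strata are likewise pulled back should be comparatively routine, and the theorem follows by combining them with Theorem~\ref{LZW2025}.
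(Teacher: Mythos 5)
Your proposal is sound in outline but takes a genuinely different route from the paper's. The paper does \emph{not} reduce to the essential case: it works directly with a general $\mathcal{A}$ by systematically replacing the role played by $\Delta(U)$ in the essential case with $\Delta\big(U\cap(U^\perp+T^\perp)\big)$ --- the Pl\"ucker coordinate of the ``essential part'' of $U$, i.e.\ the orthogonal complement of $U\cap T$ inside $U$ --- and re-proving the key facts in this generality. Concretely, Lemma~\ref{lem-2.1} shows $X\oplus\big(U\cap(U^\perp+T^\perp)\big)=\mathbb{R}^n$ iff $\Delta\big(U\cap(U^\perp+T^\perp)\big)\notin H(X)$, Lemma~\ref{lem-2} computes $\rk_{\mathfrak{M}_\mathcal{A}(U)}(I)=\dim U-\dim\big(U\cap\bigcap_{i\in I}H_i\big)$, and Lemma~\ref{coro-2} characterizes bases of $\mathfrak{M}_\mathcal{A}(U)$ via such direct sums; the three decompositions are then identified by showing each stratum determines the same auxiliary sets $L_U(\mathcal{A})$ and $L^U(\mathcal{A})$ of $(k-i)$-flats, with Theorem~\ref{LZW2025} never invoked. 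Your plan instead fibers $\mathrm{Gr}(k,n)$ over $\bigsqcup_i\mathrm{Gr}\big(k-i,r(\mathcal{A})\big)$ via $U\mapsto\big(\dim(U\cap T),(U+T)/T\big)$, checks that all three decompositions are constant along the fibers, and then quotes Theorem~\ref{LZW2025} as a black box. This is viable, and the matroid and Schubert reductions are indeed routine dimension counts, but the adjoint step needs more care than you give it: the vanishing $a_I(X)=0$ for $I$ meeting the ``$T$-coordinates'' holds only in a basis adapted to $\mathbb{R}^n=T\oplus T^\perp$, so you must first argue that the $\mathcal{A}$-adjoint decomposition is invariant under the $\mathrm{GL}_n$-action (through the induced action on $\mathbb{R}^{\binom{[n]}{k-i}}$), after which every flat of $\mathcal{A}^{(k-i)}$ becomes a cylinder over a flat of the essential adjoint $(\mathcal{A}')^{(k-i)}$ and membership of $\Delta\big(U\cap(U^\perp+T^\perp)\big)$ in a relative interior depends only on $(U+T)/T$. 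In short: the paper's route buys a self-contained, coordinate-free argument; yours buys a shorter logical reduction to the known essential case, at the cost of Pl\"ucker-coordinate and sign bookkeeping plus a coordinate-invariance step the paper never needs.
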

\section{Three $\mathcal{A}$-decompositions of the Grassmannian}\label{Sec-2}
We work with a fixed hyperplane arrangement 
\[
\mathcal{A} = \{H_i: \langle\alpha_i, v\rangle=0\mid i=1,2, \ldots, m\}
\]
in $\mathbb{R}^n$, where the notation $\langle \cdot, \cdot\rangle$ represents the standard inner product in $\mathbb{R}^n$. In this section, we focus on reviewing the definitions of the three decompositions: the $\mathcal{A}$-matroid, the $\mathcal{A}$-adjoint and the refined $\mathcal{A}$-Schubert decompositions, and also provide a classification for all $k$-restrictions of $\mathcal{A}$ by using the $\mathcal{A}$-matroid decomposition in Theorem \ref{Classification}. Combining Theorem \ref{main}, this classification can also be equivalently described by the $\mathcal{A}$-adjoint decomposition. In particular, when $\mathcal{A}$ is essential, this classification can be more straightforward and explicitly characterized by the flats of the $k$-adjoint arrangement $\mathcal{A}^{(k)}$, as shown in Corollary \ref{Classification1}.

Matroid terminology follows Oxley's book \cite{Oxley}. For each subspace $U$ of $\mathbb{R}^n$, let $\beta_i = \mathrm{Proj}_U\alpha_i$ denote the orthogonal projection of $\alpha_i$ in $U$ for $i = 1, \ldots, m$. This enables us to construct a matroid $\mathfrak{M}_{\mathcal{A}}(U)$ on the ground set $[m]$ by defining its rank function on subsets $I \subseteq [m]$ as 
\[
\rk_{\mathfrak{M}_{\mathcal{A}}(U)}(I) := \dim\big(\mathrm{span}\{\beta_i \mid i \in I\}\big).
\]
Associated with the hyperplane arrangement $\mathcal{A}$, we further construct a subclass of matroids on $[m]$ by
\[
\mathbf{M}(\mathcal{A}):=\big\{\mathfrak{M}_{\mathcal{A}}(U) \mid U \in \mathrm{Gr}(k, n)\big\}.
\]  
We are now ready to introduce the $\mathcal{A}$-matroid decomposition of the Grassmannian, which was initially presented in \cite[Definition 3.2]{Liang-wang-zhao} for essential hyperplane arrangement $\mathcal{A}$.
\begin{defi}\label{A-matroid}
{\rm For each matroid $\mathfrak{M} \in \mathbf{M}(\mathcal{A})$, let
	\[
	\Omega_{\mathcal{A}}(\mathfrak{M}) := \left\{ U \in \mathrm{Gr}(k, n) \,\big|\, \mathfrak{M}_{\mathcal{A}}(U) = \mathfrak{M} \right\},
	\]
which is known as an {\em $\mathcal{A}$-matroid stratum}. Moreover, the natural decomposition
\begin{equation*}
\mathrm{Gr}(k, n) = \bigsqcup_{\mathfrak{M} \in \mathbf{M}(\mathcal{A})} \Omega_{\mathcal{A}}(\mathfrak{M})
\end{equation*}
is said to be	the \emph{$\mathcal{A}$-matroid decomposition} of the Grassmannian $\mathrm{Gr}(k, n) $.
}
\end{defi}
Let $\mathfrak{M}\in\mathbf{M}(\mathcal{A})$. Following Definition \ref{A-matroid}, if $U_1,U_2 \in \Omega_{\mathcal{A}}(\mathfrak{M})$, then we have 
\[
\mathfrak{M}_{\mathcal{A}}(U_1)=\mathfrak{M}_{\mathcal{A}}(U_2) = \mathfrak{M}.
\]
Additionally, every such matroid $\mathfrak{M}_{\mathcal{A}}(U)$ is geometrically realized by exactly the hyperplane arrangement $\mathcal{A}|_U$. It follows from  \cite[Proposition 3.6 in p. 425 ]{Stanley} that the intersection lattices $L(\mathcal{A}|_{U_1})$ and $L(\mathcal{A}|_{U_2})$ are isomorphic. Therefore, the $\mathcal{A}$-matroid decomposition in Definition \ref{A-matroid} effectively provides a classification of $L(\mathcal{A}|_{U})$ for all $k$-dimensional subspaces $U$ in $\mathbb{R}^n$. We summarize this finding in the following theorem, established by Liang, Wang and Zhao for an essential hyperplane arrangement $\mathcal{A}$ in their work following Theorem 1.2 of \cite{Liang-wang-zhao}.
\begin{theorem}\label{Classification}
Let $\mathcal{A}$ be a hyperplane arrangement, $U_1,U_2$ be two $k$-dimensional subspaces in $\mathbb{R}^n$, and $\mathfrak{M} \in \mathbf{M}(\mathcal{A})$. If  $U_1,U_2\in \Omega_{\mathcal{A}}(\mathfrak{M})$, then 
\[
L(\mathcal{A}|_{U_1})\cong L(\mathcal{A}|_{U_2}).
\]
\end{theorem}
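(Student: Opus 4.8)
The plan is to show that whenever $U_1,U_2\in\Omega_{\mathcal{A}}(\mathfrak{M})$, the restricted arrangements $\mathcal{A}|_{U_1}$ and $\mathcal{A}|_{U_2}$ have the same underlying matroid up to relabeling, and then invoke the standard fact (cited as \cite[Proposition 3.6 in p.~425]{Stanley}) that two hyperplane arrangements with isomorphic matroids have isomorphic intersection lattices. So the content is almost entirely in the first step: identifying $L(\mathcal{A}|_{U})$-combinatorics with the matroid $\mathfrak{M}_{\mathcal{A}}(U)$.

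The key observation is that the combinatorics of $\mathcal{A}|_U$ is governed exactly by the linear dependencies among the projected normal vectors $\beta_i=\mathrm{Proj}_U\alpha_i$. Concretely, for $i$ with $U\not\subseteq H_i$ the hyperplane $H_i\cap U$ of $\mathcal{A}|_U$ has normal vector $\beta_i$ inside $U$ (and $U\subseteq H_i$ is precisely the case $\beta_i=0$, i.e. $i$ is a loop of $\mathfrak{M}_{\mathcal{A}}(U)$); moreover an intersection $\bigcap_{i\in I}(H_i\cap U)$ is nonempty and has the expected codimension controlled by $\dim\mathrm{span}\{\beta_i\mid i\in I\}=\rk_{\mathfrak{M}_{\mathcal{A}}(U)}(I)$. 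Since these arrangements are central (all $H_i\cap U$ contain $o$), every such intersection is automatically nonempty, so the intersection lattice $L(\mathcal{A}|_U)$ is canonically the lattice of flats of the matroid $\mathfrak{M}_{\mathcal{A}}(U)$ (after discarding loops, which do not change the lattice of flats). First I would make this identification precise: the poset $L(\mathcal{A}|_U)$ ordered by reverse inclusion is isomorphic to the lattice of flats of $\mathfrak{M}_{\mathcal{A}}(U)$, via $X\mapsto\{i\in[m]\mid X\subseteq H_i\cap U\text{ or }U\subseteq H_i\}$.

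Granting this, the theorem follows immediately: if $U_1,U_2\in\Omega_{\mathcal{A}}(\mathfrak{M})$ then by Definition \ref{A-matroid} we have $\mathfrak{M}_{\mathcal{A}}(U_1)=\mathfrak{M}_{\mathcal{A}}(U_2)=\mathfrak{M}$ as matroids on the common ground set $[m]$, hence they have literally the same lattice of flats, and therefore
\[
L(\mathcal{A}|_{U_1})\cong\text{(lattice of flats of }\mathfrak{M}\text{)}\cong L(\mathcal{A}|_{U_2}).
\]
Note that the dimensions $\dim U_1=\dim U_2=k$ are equal, so the ambient ranks match as well, and no essentiality hypothesis on $\mathcal{A}$ is used anywhere; this is exactly why the argument extends verbatim from the essential case of \cite{Liang-wang-zhao} to a general arrangement.

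The main obstacle is the bookkeeping in the first step, specifically handling indices $i$ with $U\subseteq H_i$ (equivalently $\beta_i=0$): such hyperplanes are simply omitted from $\mathcal{A}|_U$ by definition, and correspondingly they are loops of $\mathfrak{M}_{\mathcal{A}}(U)$, so one must check that passing to $\mathcal{A}|_U$ versus passing to the simplification/loopless part of $\mathfrak{M}_{\mathcal{A}}(U)$ yields the same lattice of flats — which it does, since deleting loops and parallel elements does not alter the lattice of flats of a matroid, and multiple hyperplanes $H_i\cap U$ coinciding in $U$ correspond to parallel elements of the matroid. Once this is verified, the rest is a formal consequence of the cited combinatorial-geometry realization result, and there is no analytic or genericity input required.
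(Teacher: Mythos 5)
Your proposal is correct and follows essentially the same route as the paper: both reduce to the trivial observation that $U_1,U_2\in\Omega_{\mathcal{A}}(\mathfrak{M})$ forces $\mathfrak{M}_{\mathcal{A}}(U_1)=\mathfrak{M}_{\mathcal{A}}(U_2)=\mathfrak{M}$, and then invoke the fact (cited to Stanley, Proposition 3.6) that the matroid $\mathfrak{M}_{\mathcal{A}}(U)$ determines $L(\mathcal{A}|_U)$ as its lattice of flats. You are somewhat more explicit than the paper's one-sentence justification in handling the loops ($\beta_i=0$, i.e., $U\subseteq H_i$) and parallel elements that arise when passing from $\mathfrak{M}_{\mathcal{A}}(U)$ on $[m]$ to the honest arrangement $\mathcal{A}|_U$, but the underlying argument is identical.
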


We now proceed to introduce the $\mathcal{A}$-adjoint decomposition of the Grassmannian associated with $\mathcal{A}$. For each $X \in L(\mathcal{A})$, the {\em relative interior} of $X$ is defined as  
\begin{equation*}
X^{\circ} = X-\bigcup_{Y\subsetneq X\text{ in } L(\mathcal{A})} Y.
\end{equation*}  
\begin{defi}\label{A-adjoint}
{\rm For each $i \in \{0,1, \ldots, k\}$ and $P \in L\big(\mathcal{A}^{(k-i)}\big) $, let
\[
\mathcal{S}_{i, P}: = \left\{ U \in \mathrm{Gr}(k, n) \mid \dim(U \cap T) = i,  \Delta\big(U \cap (U^\perp+T^\perp)\big) \in P^\circ \right\},
\]
which is called an {\em$\mathcal{A}$-adjoint stratum}. Moreover, the decomposition
\begin{equation*}
\mathrm{Gr}(k, n) = \bigsqcup_{i \in \{0,1, \ldots, k\}} \ \bigsqcup_{P \in L(\mathcal{A}^{(k-i)})} \ \mathcal{S}_{i, P}.
\end{equation*}
is said to be the {\em$\mathcal{A}$-adjoint decomposition} of the Grassmannian $\mathrm{Gr}(k, n)$.
}
\end{defi}
Immediately, following Theorem \ref{main} and Theorem \ref{Classification}, we can easily observe that if $U_1,U_2\in \mathcal{S}_{i, P}$ for some flat $P$ of the $(k-i)$-adjoint arrangement $\mathcal{A}^{(k-i)}$ with $i\le k$, then 
\[
L(\mathcal{A}|_{U_1})\cong L(\mathcal{A}|_{U_2}).
\] 
Therefore, we can classify all $k$-restrictions of a general hyperplane arrangement $\mathcal{A}$ by utilizing all $i$-adjoint arrangements $\mathcal{A}^{(i)}$ with $i\le k$.

When $\mathcal{A}$ is essential, we have $T=\{o\}$. It follows that $\dim(U \cap T)=0$ for all $U\in\mathrm{Gr}(k, n)$ in this case. Then, the $\mathcal{A}$-adjoint decomposition of the Grassmannian 	$\mathrm{Gr}(k, n)$ is simplified to the form 
\[
\mathrm{Gr}(k, n)=\bigsqcup_{\substack{P\in L(\mathcal{A}^{(k)})}}\mathcal{S}_{0,P},
\]
which was first proposed by Liang, Wang and Zhao in \cite[Definition 3.1]{Liang-wang-zhao}. In this context, it is clear that for any two $k$-dimensional subspaces $U_1,U_2$ in $\mathbb{R}^n$, $U_1,U_2\in \mathcal{S}_{0,P}$ for some flat $P$ of the $k$-adjoint arrangement $\mathcal{A}^{(k)}$ if and only if  $\Delta(U_1),\Delta(U_2)\in P^{\circ}$. Following this, the above classification can be reduced to that $L(\mathcal{A}|_{U_1})$ and $L(\mathcal{A}|_{U_2})$ are isomorphic whenever $\Delta(U_1),\Delta(U_2)\in P^{\circ}$ with $P\in L(\mathcal{A}^{(k)})$. Furthermore, a basic decomposition for the whole space $\mathbb{R}^{\binom{[n]}{k}}$, associated with the flats of the $k$-adjoint arrangement $\mathcal{A}^{(k)}$, is given by
\[
\mathbb{R}^{\binom{[n]}{k}} = \bigsqcup_{P \in L(\mathcal{A}^{(k)})} P^{\circ}.
\]
This enables us to classify all $k$-restrictions of the essential arrangement $\mathcal{A}$ using the combinatorial structure $\mathcal{A}^{(k)}$, which was implicitly established in the work \cite[Theorem 1.2]{Liang-wang-zhao}. We conclude  the previous arguments with the following result.
\begin{coro}\label{Classification1}
Let $\mathcal{A}$ be a hyperplane arrangement, and $U_1,U_2$ be two $k$-dimensional subspaces in $\mathbb{R}^n$. If 
$U_1,U_2\in \mathcal{S}_{i, P}$ for some flat $P$ of the $(k-i)$-adjoint arrangement $\mathcal{A}^{(k-i)}$ with $i\le k$, then
\[
L(\mathcal{A}|_{U_1})\cong L(\mathcal{A}|_{U_2}).
\]
Particularly, when $\mathcal{A}$ is essential, if 
$\Delta(U_1),\Delta(U_2)\in P^{\circ}$ for some flat $P$ of the $k$-adjoint arrangement $\mathcal{A}^{(k)}$, then
\[
L(\mathcal{A}|_{U_1})\cong L(\mathcal{A}|_{U_2}).
\]
\end{coro}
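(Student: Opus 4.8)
The plan is to derive Corollary \ref{Classification1} as a direct consequence of Theorem \ref{main} together with Theorem \ref{Classification}. First I would observe that the $\mathcal{A}$-matroid decomposition and the $\mathcal{A}$-adjoint decomposition coincide by Theorem \ref{main}; hence each $\mathcal{A}$-adjoint stratum $\mathcal{S}_{i,P}$ equals some $\mathcal{A}$-matroid stratum $\Omega_{\mathcal{A}}(\mathfrak{M})$ for a uniquely determined $\mathfrak{M}\in\mathbf{M}(\mathcal{A})$. Consequently, if $U_1,U_2\in\mathcal{S}_{i,P}$, then $U_1,U_2\in\Omega_{\mathcal{A}}(\mathfrak{M})$ for the same $\mathfrak{M}$, so $\mathfrak{M}_{\mathcal{A}}(U_1)=\mathfrak{M}_{\mathcal{A}}(U_2)=\mathfrak{M}$, and Theorem \ref{Classification} immediately yields $L(\mathcal{A}|_{U_1})\cong L(\mathcal{A}|_{U_2})$. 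That disposes of the first assertion.

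For the ``particularly'' part, I would specialize to the case where $\mathcal{A}$ is essential, so $T=\{o\}$ and $T^{\perp}=\mathbb{R}^n$. Then for every $U\in\mathrm{Gr}(k,n)$ we have $\dim(U\cap T)=0$, forcing $i=0$, and $U\cap(U^{\perp}+T^{\perp})=U\cap\mathbb{R}^n=U$, so $\Delta\big(U\cap(U^{\perp}+T^{\perp})\big)=\Delta(U)$. Thus the $\mathcal{A}$-adjoint stratum $\mathcal{S}_{0,P}$ reduces to $\{U\in\mathrm{Gr}(k,n)\mid\Delta(U)\in P^{\circ}\}$ for $P\in L(\mathcal{A}^{(k)})$. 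Hence $\Delta(U_1),\Delta(U_2)\in P^{\circ}$ is precisely the condition $U_1,U_2\in\mathcal{S}_{0,P}$, and applying the first part with $i=0$ gives the claimed isomorphism $L(\mathcal{A}|_{U_1})\cong L(\mathcal{A}|_{U_2})$.

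I do not anticipate a serious obstacle here, since the corollary is essentially a repackaging of Theorem \ref{main} and Theorem \ref{Classification} once the definitions of the strata are unwound; the only point requiring a little care is the identification $U\cap(U^{\perp}+T^{\perp})=U$ in the essential case, which follows from $T^{\perp}=\mathbb{R}^n$, and the bookkeeping that the Pl\"ucker coordinate $\Delta$ is well defined up to scalar so that membership in the (cone) $P^{\circ}$ is unambiguous. If one wishes to avoid invoking Theorem \ref{main}, an alternative route is to note directly that each $\mathcal{S}_{i,P}$ is contained in a single $\mathcal{A}$-matroid stratum---i.e., $\mathfrak{M}_{\mathcal{A}}(U)$ is constant on $\mathcal{S}_{i,P}$---and then apply Theorem \ref{Classification}; but since Theorem \ref{main} is already available in the paper, the shortest argument is the one above.
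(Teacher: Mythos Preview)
Your proposal is correct and follows essentially the same route as the paper: the paper derives the corollary directly from Theorem~\ref{main} and Theorem~\ref{Classification}, and for the essential case observes that $T=\{o\}$ forces $i=0$ and reduces $\mathcal{S}_{0,P}$ to $\{U\in\mathrm{Gr}(k,n)\mid \Delta(U)\in P^{\circ}\}$, exactly as you do.
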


The last decomposition generalizes the classical Schubert decomposition via the hyperplane arrangement $\mathcal{A}$.
Basic notations and concepts related to Schubert decompositions can be found in \cite{Fulton-1997}.

From now on, we assume $r(\mathcal{A})=r$. Let 
\[
\boldsymbol{F}\colon T = F_0 \subsetneq F_1 \subsetneq \cdots \subsetneq F_r = \mathbb{R}^n
\]
be a maximal chain of $L(\mathcal{A})$, where each \(F_j\) has dimension $n-r+j$. For each $i \in \{0,1, \ldots, k\}$ and a $(k-i)$-subset $\sigma=\{i_1,i_2,\ldots,i_{k-i}\}\subseteq [r]$ with elements ordered increasingly, 
we define a subset of the Grassmannian $\mathrm{Gr}(k, n)$ associated with $i$ and \(\sigma\) as   
\begin{align*}
	\Omega_{\boldsymbol{F}}(i, \sigma) 
	:= \Big\{ U \in \mathrm{Gr}(k, n) \;\Big|\;
	\begin{array}{l}
		\dim(U \cap T) = i, \\
		\dim(U \cap F_{i_l}) > \dim(U \cap F_{i_{l}-1}), 1 \leq l \leq k - i
	\end{array}
	\Big\}.
\end{align*}
Then, there is an immediate disjoint union
\begin{equation*}
\mathrm{Gr}(k, n)=\bigsqcup_{i \in \{0,1, \ldots, k\}}\bigsqcup_{\sigma\in\binom{[r]}{k-i}}\Omega_{\boldsymbol{F}}(i, \sigma).
\end{equation*}
Let $\mathbf{mc}\big(L(\mathcal{A})\big)$ denote the set of maximal chains of $L(\mathcal{A})$. 
It is clear that  
\begin{align*}
\mathrm{Gr}(k, n) 
&=
\bigsqcup_{i \in \{0,1, \ldots, k\}}\bigcap_{\boldsymbol{F} \in \mathbf{mc}(L(\mathcal{A}))}
\bigsqcup_{\sigma(\boldsymbol{F}) \in \binom{[r]}{k-i}}
\Omega_{\boldsymbol{F}}\big(i, \sigma(\boldsymbol{F})\big)\\[6pt]
& = \bigsqcup_{i \in \{0,1,\ldots, k\}}\bigsqcup_{\sigma_i\colon \mathbf{mc}(L(\mathcal{A})) \to \binom{[r]}{k-i}}
\bigcap_{\boldsymbol{F} \in \mathbf{mc}(L(\mathcal{A}))}
\Omega_{\boldsymbol{F}}\big(i, \sigma_i(\boldsymbol{F})\big),
\end{align*}
where $\sigma_i$ is an $i$-Schubert symbol of $\mathcal{A}$ presented in the upcoming definition.
\begin{defi}\label{A-Sucbert}
	Given $i \in \{0, \ldots, k\}$, an {\em$i$-Schubert symbol} of \( \mathcal{A} \) is a map $$ \sigma_i\colon\mathbf{mc}\big(L(\mathcal{A})\big)\longrightarrow \binom{[r]}{k-i}$$ such that  
\[
\Omega_{\mathcal{A}}(i, \sigma_i):=\bigcap_{\boldsymbol{F}\in\mathbf{mc}(L(\mathcal{A}))}\Omega_{\boldsymbol{F}}\big(i, \sigma_i(\boldsymbol{F})\big)\neq \varnothing.
\]
The {\em refined $\mathcal{A}$-Schubert decomposition} of the Grassmannian $\mathrm{Gr}(k, n)$ is given by
\begin{equation*}
\mathrm{Gr}(k, n)=\bigsqcup_{i \in \{0,1, \ldots, k\}}\bigsqcup_{\sigma_i\text{ is $i$-Schubert symbol}}\Omega_{\mathcal{A}}(i, \sigma_i).
\end{equation*}  
\end{defi}
Likewise, when $\mathcal{A}$ is essential, the refined $\mathcal{A}$-Schubert decomposition of the Grassmannian $\mathrm{Gr}(k, n)$ becomes the simplified version
\[
\mathrm{Gr}(k, n)=\bigsqcup_{\sigma_0\text{ is $0$-Schubert symbol}}\Omega_{\mathcal{A}}(0, \sigma_0),
\]
which agrees with Definition 3.3 of \cite{Liang-wang-zhao}. It should be noted that when $\mathcal{A}$ is the Boolean arrangement, the refined $\mathcal{A}$-Schubert decomposition corresponds to the common refinement of $n!$ permuted Schubert decompositions from \cite[Section 3]{Gelfand-Goresky-MacPherson-Serganova1987}.
\section{Proof of Theorem \ref{main}}
This section is devoted to proving Theorem \ref{main}. We still have some work to do before we achieve this. Let us first examine a close connection between a $k$-dimensional subspace and the elements in $L(\mathcal{A})$, which is the basis for defining the $k$-adjoint arrangement.
\begin{lemma}\label{lem-2.1}
Let $U\in\mathrm{Gr}(k, n)$. Then, for any $X \in L_{k-l}(\mathcal{A})$ with $l= \dim(U \cap T)$, we have 
\[
X \oplus\big(U \cap (U^\perp + T^\perp)\big) = \mathbb{R}^n\quad\text{ if and only if }\quad\Delta\big(U \cap (U^\perp + T^\perp)\big) \notin H(X).
\] 
\end{lemma}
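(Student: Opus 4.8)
The plan is to reduce the equivalence to the classical generalized Laplace (cofactor) expansion of an $n\times n$ determinant. Write $V := U \cap (U^{\perp}+T^{\perp})$ and $l := \dim(U\cap T)$, so that $X\in L_{k-l}(\mathcal{A})$ means $\dim X=n-(k-l)$. First I would record the linear-algebraic identity $(U\cap T)^{\perp}=U^{\perp}+T^{\perp}$, which shows that $V = U\cap (U\cap T)^{\perp}$ is exactly the orthogonal complement of $U\cap T$ inside $U$; hence $\dim V = k-l$, and $\Delta(V)\in\mathbb{R}^{\binom{[n]}{k-l}}$, the ambient space of the hyperplane $H(X)$, so the statement is well posed. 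Moreover $\dim V+\dim X=n$, so $X\oplus V=\mathbb{R}^{n}$ holds if and only if $X\cap V=\{o\}$, if and only if the $n\times n$ matrix $M$ whose rows are a $(k-l)\times n$ matrix representative $B$ of $V$ followed by an $(n-k+l)\times n$ matrix representative $C$ of $X$ is nonsingular.

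The second step identifies $\det M$ with the linear form defining $H(X)$, evaluated at $\Delta(V)$. Expanding $\det M$ by Laplace along its first $k-l$ rows yields
\[
\det M=\sum_{I\in\binom{[n]}{k-l}}(-1)^{\frac{1}{2}(k-l)(k-l+1)+\sum_{i\in I}i}\,\Delta_{I}(V)\,\Delta_{[n]-I}(X),
\]
where $\tfrac{1}{2}(k-l)(k-l+1)=1+2+\cdots+(k-l)$ is the sum of the chosen row indices and $\sum_{i\in I}i$ the sum of the chosen column indices. By Definition~\ref{k-adjoint-def} applied to $X\in L_{k-l}(\mathcal{A})$, the coefficient of $\Delta_{I}(V)$ here is precisely $a_{I}(X)$, so the right-hand side equals $\sum_{I}a_{I}(X)\,\Delta_{I}(V)$. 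Changing the representatives $B, C$ only rescales $\Delta(V)$ and $\Delta(X)$ by nonzero constants, and both the nonsingularity of $M$ and membership in $H(X)$ are insensitive to such rescalings; also $\Delta(V)\neq 0$ because $V$ is a genuine $(k-l)$-dimensional subspace.

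Chaining the two steps gives $X\oplus V=\mathbb{R}^{n}\iff\det M\neq 0\iff\sum_{I}a_{I}(X)\,\Delta_{I}(V)\neq 0\iff\Delta(V)\notin H(X)$, which is the claim. I expect the only delicate point to be the sign bookkeeping, namely aligning the Laplace sign $(-1)^{(1+\cdots+(k-l))+\sum_{i\in I}i}$ exactly with the sign $(-1)^{\frac12 k(k+1)+\sum_{i\in I}i}$ built into Definition~\ref{k-adjoint-def}; the one conceptual ingredient beyond that is the dimension count $\dim V=k-l$, which is what pins down the relevant adjoint as $\mathcal{A}^{(k-l)}$. The degenerate cases ($l=k$, where $V=\{o\}$ and $X=\mathbb{R}^{n}$, and $l=0$, where $V=U$) are subsumed by the same computation under the convention that the empty minor equals $1$.
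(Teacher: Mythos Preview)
Your proof is correct and follows essentially the same route as the paper: form the $n\times n$ matrix from representatives of $V$ and $X$, apply the Laplace expansion along the first $k-l$ rows, and identify the resulting linear form with the defining equation of $H(X)$. Your sign worry is unfounded: the ``$k$'' in Definition~\ref{k-adjoint-def} is the rank of the flat, so for $X\in L_{k-l}(\mathcal{A})$ one substitutes $k-l$ for $k$ there, and the Laplace sign $(-1)^{\frac12(k-l)(k-l+1)+\sum_{i\in I}i}$ matches $a_I(X)$ on the nose.
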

\begin{proof}
Let $A$ and $B$ be matrix representatives of $U \cap (U^\perp+T^\perp)$ and $X$, respectively. Note that the dimensions of $U \cap (U^\perp+T^\perp)$ and $X$ are $k-l$ and $n-k+l$, respectively. This implies that the matrix $A$ together with the matrix $B$ yields an $n \times n$ matrix
\[
M = \left[\begin{array}{c} A \\ B \end{array}\right].
\]
Therefore, $\big(U \cap (U^\perp+T^\perp)\big)\oplus X= \mathbb{R}^n$ is equivalent to that the determinant $\det (M)$ is non-zero. By employing Laplace's expansion theorem on the first $k-l$ rows, we obtain
\[
\det (M) = \sum_{I\in\binom{[n]}{k-l}}(-1)^{\frac{1}{2}(k-l)(k-l + 1)+\sum_{i\in I}i}\Delta_I\big(U \cap (U^\perp+T^\perp)\big)\Delta_{[n]- I}(X).
\]
It immediately follows from the definition of the adjoint $H(X)$ in Definition \ref{k-adjoint-def} that $\big(U \cap (U^\perp+T^\perp)\big)\oplus X = \mathbb{R}^n$ if and only if $\Delta\big(U \cap (U^\perp+T^\perp)\big) \notin H(X)$.
\end{proof}

Let $U \in \mathrm{Gr}(k, n)$ with $\dim(U \cap T) = l$. It follows from Lemma \ref{lem-2.1} that for any $X \in L_{k-l}(\mathcal{A})$, $X \oplus \big(U \cap (U^\perp+T^\perp)\big) = \mathbb{R}^n$ if and only if $\Delta\big(U \cap (U^\perp+T^\perp)\big)\notin H(X)$. Based on this,  we define
\begin{align}\label{A}
L_U(\mathcal{A}):&=\big\{X\in L_{k-l}(\mathcal{A})\mid\Delta\big(U \cap (U^\perp+T^\perp)\big)\notin H(X)\big\}\notag\\[6pt]
&=\big\{X\in L_{k-l}(\mathcal{A})\mid X\oplus \big(U \cap (U^\perp+T^\perp)\big)=\mathbb{R}^n\big\}
\end{align}
and 
\begin{equation}\label{B}
L^U(\mathcal{A}):=\big\{X\in L_{k-l}(\mathcal{A})\mid \Delta\big(U \cap (U^\perp+T^\perp)\big)\in H(X)\big\}.
\end{equation}

Furthermore, to establish the relationship between the $\mathcal{A}$-matroid decomposition and the refined $\mathcal{A}$-Schubert decomposition, the next lemma is also required.
\begin{lemma}\label{lem-2}
Let $U$ be a subspace in $\mathbb{R}^n$. Then, for each subset $I$ of $[m]$, we have
\[
\dim\Big(U\bigcap\big(\bigcap_{i\in I} H_i\big)\Big)=\dim U-\dim\big(\mathrm{span}\{\beta_i\mid i\in I\}\big)
\]
and the rank function of the matroid $\mathfrak{M}_\mathcal{A}(U)$ satisfies
\[
\rk_{\mathfrak{M}_\mathcal{A}(U)}(I)=\dim U-\dim\Big(U\bigcap\big(\bigcap_{i\in I} H_i\big)\Big).
\]
\end{lemma}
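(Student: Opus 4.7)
The plan is to reduce both identities to a single linear-algebra statement about the orthogonal complement inside $U$ of the span of the projections $\beta_i$. The key observation is that since $\beta_i = \mathrm{Proj}_U \alpha_i$, the difference $\alpha_i-\beta_i$ lies in $U^\perp$, so for every $v\in U$ we have
\[
\langle \alpha_i, v\rangle = \langle \beta_i, v\rangle + \langle \alpha_i-\beta_i, v\rangle = \langle \beta_i, v\rangle.
\]
Consequently $U\cap H_i = \{v\in U\mid \langle \beta_i,v\rangle=0\}$, namely the hyperplane in $U$ defined by $\beta_i$ (when $\beta_i\ne 0$), or all of $U$ when $\beta_i=0$.

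Given this, I would first rewrite
\[
U\cap\Big(\bigcap_{i\in I}H_i\Big) = \big\{v\in U\mid \langle \beta_i,v\rangle =0 \text{ for all } i\in I\big\},
\]
which is precisely the orthogonal complement, taken inside $U$, of $W_I:=\mathrm{span}\{\beta_i\mid i\in I\}$. Because $\beta_i\in U$ for every $i\in [m]$, we have $W_I\subseteq U$, and the standard rank–nullity identity for the orthogonal complement within a Euclidean subspace gives
\[
\dim\Big(U\cap \bigcap_{i\in I}H_i\Big) = \dim U - \dim W_I,
\]
which is the first claimed equation.

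For the second equation I would simply unfold the definition: by construction, $\rk_{\mathfrak{M}_\mathcal{A}(U)}(I)=\dim W_I$, so substituting the first identity yields
\[
\rk_{\mathfrak{M}_\mathcal{A}(U)}(I)=\dim W_I=\dim U-\dim\Big(U\cap \bigcap_{i\in I}H_i\Big),
\]
as required. There is no real obstacle here; the only subtlety worth stating carefully is the reduction $U\cap H_i=\beta_i^{\perp}\cap U$, since after that everything is orthogonal-complement bookkeeping inside the finite-dimensional inner product space $U$.
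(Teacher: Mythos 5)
Your proof is correct and follows essentially the same route as the paper: both rewrite $U\cap\bigcap_{i\in I}H_i$ as $\{v\in U\mid\langle\beta_i,v\rangle=0,\ i\in I\}$ using $\alpha_i-\beta_i\in U^\perp$, and then read off the dimension as the orthogonal complement of $\mathrm{span}\{\beta_i\mid i\in I\}$ inside $U$; you simply spell out the orthogonal-complement bookkeeping that the paper dispatches with ``follows immediately.''
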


\begin{proof}
When $|I| = 0$ (that is, $I$ is empty), we have that $\bigcap_{i\in I}H_i=\mathbb{R}^n$ and the result holds obviously. Consider the case $|I|\geq 1$. Notice that
\[
U\bigcap\big(\bigcap_{i\in I} H_i\big)=\big\{v\in U \mid \langle v,\beta_i\rangle = 0\text{ for }i\in I\big\}.
\]
The dimension formula follows immediately, as does the rank formula. 
\end{proof}
The following lemma serves as a key bridge between $\mathcal{A}$-matroid strata and the other two strata, and hence plays a central role in establishing connections among the $\mathcal{A}$-matroid, the $\mathcal{A}$-adjoint and the refined $\mathcal{A}$-Schubert decompositions of the Grassmannian.
\begin{lemma}\label{coro-2}
Let $U\in\mathrm{Gr}(k, n)$ with $\dim(U \cap T) = l$ and $I\subseteq [m]$. Then, the following statements are equivalent:
\begin{itemize}
\item[{\rm (1)}] $I$ is a basis of the matroid $\mathfrak{M}_\mathcal{A}(U)$.
\item[{\rm (2)}] $|I| = k-l$ and $\{\beta_i \mid i\in I\}$ is linearly independent.
\item[{\rm (3)}] $|I| = k-l$ and $\bigcap_{i\in I}H_i\in L_U(\mathcal{A})$.
\end{itemize}
\end{lemma}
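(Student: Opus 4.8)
The plan is to fix $l=\dim(U\cap T)$, abbreviate $V:=U\cap(U^{\perp}+T^{\perp})$ throughout, and begin by recording the elementary linear algebra that makes the three conditions comparable. Since $U^{\perp}+T^{\perp}=(U\cap T)^{\perp}$, the subspace $V$ is exactly the orthogonal complement of $U\cap T$ inside $U$; in particular $\dim V=k-l$ and the standard inner product restricts to a nondegenerate form on $V$. Because each $H_i$ contains $T$, every $\alpha_i$ lies in $T^{\perp}$, so for $v\in U$ we have $\langle v,\alpha_i\rangle=\langle v,\mathrm{Proj}_U\alpha_i\rangle=\langle v,\beta_i\rangle$ (the discrepancy $\alpha_i-\beta_i$ lies in $U^{\perp}$); taking $v\in U\cap T$ shows $\beta_i\perp(U\cap T)$, hence $\beta_i\in V$ for every $i$. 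Finally, applying Lemma \ref{lem-2} with $I=[m]$ yields $\rk_{\mathfrak{M}_\mathcal{A}(U)}([m])=\dim U-\dim(U\cap T)=k-l$, so the matroid $\mathfrak{M}_\mathcal{A}(U)$ has rank $k-l$.

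With these facts in hand, the equivalence $(1)\Leftrightarrow(2)$ is immediate: $\mathfrak{M}_\mathcal{A}(U)$ is by definition the matroid realized by the vector family $(\beta_i)_{i\in[m]}$, so a subset $I\subseteq[m]$ is a basis precisely when $\{\beta_i\mid i\in I\}$ is linearly independent and $|I|$ equals the rank $k-l$ of the matroid. For the remaining equivalence I will always assume $|I|=k-l$ and study $X_I:=\bigcap_{i\in I}H_i$, which is a nonempty intersection of hyperplanes of $\mathcal{A}$ and hence an element of $L(\mathcal{A})$. The key computation, using $\langle v,\alpha_i\rangle=\langle v,\beta_i\rangle$ on $V$, is
\[
X_I\cap V=\bigl\{v\in V\ \big|\ \langle v,\beta_i\rangle=0\text{ for all }i\in I\bigr\}.
\]

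To prove $(2)\Rightarrow(3)$: if $\{\beta_i\mid i\in I\}$ is linearly independent then applying $\mathrm{Proj}_U$ shows $\{\alpha_i\mid i\in I\}$ is linearly independent, so $\dim X_I=n-(k-l)$ and $X_I\in L_{k-l}(\mathcal{A})$; moreover $\{\beta_i\mid i\in I\}$ is then a basis of the $(k-l)$-dimensional space $V$, so by the displayed identity and nondegeneracy of the form on $V$ we get $X_I\cap V=\{0\}$, whence $X_I\oplus V=\mathbb{R}^n$ by a dimension count and $X_I\in L_U(\mathcal{A})$ by $(\ref{A})$. For $(3)\Rightarrow(2)$: membership $X_I\in L_U(\mathcal{A})$ means $X_I\oplus V=\mathbb{R}^n$, so $X_I\cap V=\{0\}$; by the displayed identity the $|I|=\dim V$ linear functionals $v\mapsto\langle v,\beta_i\rangle$ on $V$ have trivial common kernel, hence are linearly independent in $V^{*}$, and transporting back through the inner-product isomorphism $V\cong V^{*}$ (legitimate since each $\beta_i\in V$) shows $\{\beta_i\mid i\in I\}$ is linearly independent. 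This closes the cycle of implications.

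I expect the only genuine subtlety to be the opening paragraph: correctly identifying $U\cap(U^{\perp}+T^{\perp})$ with the orthogonal complement of $U\cap T$ in $U$, and verifying both $\beta_i\in V$ and the identity $\langle v,\alpha_i\rangle=\langle v,\beta_i\rangle$ on $V$. Once these are in place the rest is bookkeeping with dimensions and the nondegeneracy of the inner product restricted to $V$. I will also check that the degenerate cases are covered uniformly by the same argument — when $l=k$ one has $V=\{0\}$ and all three statements reduce to $I=\varnothing$, and the empty set $I=\varnothing$ in general is handled by the same computations with $\dim V=k-l$.
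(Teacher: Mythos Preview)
Your proof is correct and follows essentially the same route as the paper's: both identify $V=U\cap(U^\perp+T^\perp)$ as a $(k-l)$-dimensional space containing every $\beta_i$, use the identity $\langle v,\alpha_i\rangle=\langle v,\beta_i\rangle$ on $V$ to rewrite $X_I\cap V$, and then reduce the direct-sum condition to linear independence of $\{\beta_i\mid i\in I\}$ via nondegeneracy of the inner product on $V$. The only cosmetic differences are that the paper characterizes $V$ as $\mathrm{span}\{\beta_i\mid i\in[m]\}$ rather than as the orthogonal complement of $U\cap T$ in $U$, argues $(2)\Rightarrow(3)$ with an explicit Gram determinant instead of your abstract nondegeneracy statement, and handles $(3)\Rightarrow(2)$ by contradiction rather than your direct dual-space argument.
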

\begin{proof}
Note that $\dim U=k$ and $\bigcap_{H \in \mathcal{A}} H=T$. We first assert that 
\[
U \cap (U^\perp+T^\perp)=\mathrm{span}\big\{\beta_i \mid i\in [m]\big\}.
\]
Clearly, $\mathrm{span}\big\{\beta_i \mid i\in [m]\big\}\subseteq U \cap (U^\perp+T^\perp)$. Furthermore, It follows from Theorem \ref{lem-2} that 
\[
\dim\big(\mathrm{span}\{\beta_i \mid i\in [m]\}\big) = k-l,
\]
which equals the dimension of $U \cap (U^\perp+T^\perp)$. Hence, the assertion holds. 

It is trivial that (1) is equivalent to (2). Now, we verify that (2) is equivalent to (3). Suppose $|I| = k-l$ and $\{\beta_i \mid i \in I\}$ is linearly independent. Then $\{\beta_i \mid i \in I\}$ forms a basis of $U \cap (U^\perp+T^\perp)$, $\{\alpha_i \mid i \in I\}$ is linearly independent and $\dim\big(\bigcap_{i\in I}H_i\big)=n - k + l$. For each $v\in\big(U \cap (U^\perp+T^\perp)\big)\bigcap\big(\bigcap_{i\in I} H_i\big)$, let $v = \sum_{i\in I}a_i\beta_i$. We derive
\[
\langle v,\alpha_j\rangle=\sum_{i\in I}a_i\langle\beta_i,\beta_j\rangle = 0,\quad \ j\in I.
\]
Since $\{\beta_i \mid i \in I\}$ is linearly independent, we deduce that $\det[\langle\beta_i,\beta_j\rangle]\neq 0$. It follows that all $a_i$ with $i \in I$ are zero. So, we have $v = 0$. As
\[
\dim\big(U \cap (U^\perp+T^\perp)\big) + \dim\big(\bigcap_{i\in I}H_i\big) = n,
\]
we conclude that $\big(U \cap (U^\perp+T^\perp)\big)\oplus\bigcap_{i\in I}H_i=\mathbb{R}^n$, and hence $\bigcap_{i\in I}H_i\in L_U(\mathcal{A})$.
	
Conversely, let $|I| = k-l$ and $\mathbb{R}^n =\big(U \cap (U^\perp+T^\perp)\big)\,\oplus\,\bigcap_{i\in I}H_i$. It is trivial that $\alpha_i$ with $i\in I$ are linearly independent. We prove the remainder by negation. Suppose $\{\beta_i \mid i \in I\}$ is linearly dependent. Then, there exists a non-zero vector $v\in U \cap (U^\perp+T^\perp)$ such that $\langle v,\beta_i\rangle = 0$ for all $i\in I$. For each $i \in I$, let $\alpha_i=\beta_i+\gamma_i$ with $\gamma_i\in U^{\perp}$. Then, we have 
\[
\langle v,\alpha_i\rangle=\langle v, \beta_i\rangle+\langle v,\gamma_i\rangle = 0,\quad\forall\,i\in I.
\]
Thus $v\in\big (U \cap (U^\perp+T^\perp)\big)\bigcap\big(\bigcap_{i\in I}H_i\big)$, contradicting the given direct sum.
\end{proof}

Based on the previous preparation, we now return to the proof of Theorem \ref{main}.
\begin{proof}[Proof of Theorem \ref{main}]
First, we show that the $\mathcal{A}$-matroid decomposition in Definition \ref{A-matroid} is the same as the $\mathcal{A}$-adjoint decomposition in Definition \ref{A-adjoint}. Let $\mathcal{S}_{i, P}$ be a $\mathcal{A}$-adjoint stratum presented in Definition \ref{A-adjoint}, where $i \in \{0,1, \ldots, k\}$ and $P\in L(\mathcal{A}^{(k-i)})$. Assume $U\in \mathcal{S}_{i, P}$. It implies that 
\[
\dim(U \cap T) = i\quad\And\quad\Delta\big(U \cap (U^\perp+T^\perp)\big)\in P^{\circ}.
\]
This is equivalent to that
\[
\Delta\big(U \cap (U^\perp+T^\perp)\big)\in P\quad\text{and} \quad \Delta\big(U \cap (U^\perp+T^\perp)\big)\notin Q
\]
for all $Q\in L(\mathcal{A}^{(k-i)})$ such that $Q\subsetneq P$. We further obtain that
\begin{align*}
P-\bigcup_{X\in L_U(\mathcal{A})} H(X)&= P^{\circ} =P-\bigcup_{X\in L(\mathcal{A}),\,P\nsubseteq H(X)} H(X)
\end{align*}
from \eqref{A} and 
\begin{align*}
\bigcap_{X\in L^{U}(\mathcal{A})} H(X)&= P =\bigcap_{X\in L(\mathcal{A}),\,P\subseteq H(X)} H(X)
\end{align*}
from \eqref{B}. It follows that 
\begin{align*}
L^{U}(\mathcal{A}) = \big\{X \in L_{k-i}(\mathcal{A}) \mid P \subseteq H(X)\big\}
\end{align*}	
and
\begin{align*}
 L_U(\mathcal{A}) = \big\{X \in L_{k-i}(\mathcal{A}) \mid P \nsubseteq H(X)\big\}.
\end{align*}	
Thus, when $U_1,U_2\in\mathcal{S}_{i,P}$, we have
\[
L^{U_1}(\mathcal{A}) = L^{U_2}(\mathcal{A})\quad\And\quad L_{U_1}(\mathcal{A}) = L_{U_2}(\mathcal{A}).
\]
Together with Lemma \ref{coro-2}, we arrive at $\mathfrak{M}_\mathcal{A}(U_1)= \mathfrak{M}_\mathcal{A}(U_2)$. Suppose $\mathfrak{M}\in\mathbf{M}(\mathcal{A})$ denotes the common matroid $\mathfrak{M}_\mathcal{A}(U)$ for all $U\in\mathcal{S}_{i,P}$. We deduce  $\mathcal{S}_{i,P}\subseteq\Omega_{\mathcal{A}}(\mathfrak{M})$.
	
Conversely, let $\Omega_{\mathcal{A}}(\mathfrak{M})$ be a $\mathcal{A}$-matroid stratum presented in Definition \ref{A-matroid}, where $\mathfrak{M} \in \mathbf{M}(\mathcal{A})$ with its rank $r(\mathfrak{M}) = k-i$. It follows that for any $U\in\Omega_{\mathcal{A}}(\mathfrak{M})$,
\[
\dim(U \cap T) = i\quad\And\quad\mathfrak{M}=\mathfrak{M}_{\mathcal{A}}(U).
\]
Let $U_1,U_2\in \Omega_{\mathcal{A}}(\mathfrak{M})$. Applying Lemma \ref{coro-2} again, we derive that $L_{U_1}(\mathcal{A}) = L_{U_2}(\mathcal{A})$. Consequently, $L^{U_1}(\mathcal{A}) = L^{U_2}(\mathcal{A})$. Let 
\[
P = \bigcap_{X \in L^{U_1}(\mathcal{A})}H(X) =  \bigcap_{X \in L^{U_2}(\mathcal{A})}H(X) \in L(\mathcal{A}^{(k-i)}).
\]
Then $\Delta\big(U_1 \cap (U_1^\perp+T^\perp)\big)$ and $\Delta\big(U_2 \cap (U_2^\perp+T^\perp)\big)$  lie in $P^\circ$. Then all members of $\Omega_{\mathcal{A}}( \mathfrak{M})$ lie in the same $\mathcal{A}$-adjoint stratum $\mathcal{S}_{i,P}$. This implies $\Omega_{\mathcal{A}}( \mathfrak{M}) \subseteq \mathcal{S}_{i,P}$.
	
Next, we show that the $\mathcal{A}$-matroid decomposition coincides with the refined $\mathcal{A}$-Schubert decomposition in Definition \ref{A-Sucbert}. Let $\Omega_\mathcal{A}(i, \sigma_i)$ be a stratum defined in Definition \ref{A-Sucbert}, where $i \in \{0,1,\ldots,k\}$ and $\sigma_i$ is an $i$-Schubert symbol of $\mathcal{A}$. Let $U \in \Omega_\mathcal{A}(i, \sigma_i)$. 
For each flag $\boldsymbol{F} \in\mathbf{mc}\big(L(\mathcal{A})\big)$, write
\(\boldsymbol{F}\colon T = F_0 \subsetneq F_1 \subsetneq \cdots \subsetneq F_r = \mathbb{R}^n\)
and
$\sigma_i(\boldsymbol{F}) = \big\{i_1(\boldsymbol{F}), \ldots, i_{k-i}(\boldsymbol{F})\big\}$.
Then we have $\dim (U\cap T) = i$ and
\[
\dim \left(U\cap F_{i_l(\boldsymbol{F})}\right) > \dim \left(U\cap F_{i_l(\boldsymbol{F})-1}\right)
\]
for any $l\in \{1, 2,\ldots, k-i\}$. Therefore, for each $X \in L_{k-i}\big(\mathcal{A}\big)$, we further have that $\big(U \cap (U^\perp+T^\perp)\big)\oplus X=\mathbb{R}^n$ if and only if there exists some flag $\boldsymbol{F}\in\mathbf{mc}\big(L(\mathcal{A})\big)$ with $\sigma(\boldsymbol{F}) = \{r-k+i+1, \ldots, r\}$ such that $X = F_{r-k+i}$. By \eqref{A} again, we derive
\[
L_U(\mathcal{A}) =\big\{F_{r-k+i} \mid \boldsymbol{F}\in\mathbf{mc}\big(L(\mathcal{A})\big)\,\And\, \sigma(\boldsymbol{F}) = \{r-k+i+1, \ldots, r\}\big\}.
\]
Thus for $U_1,U_2\in \Omega_\mathcal{A}(i,\sigma_i)$, we conclude $L_{U_1}(\mathcal{A}) = L_{U_2}(\mathcal{A})$. 
Combining Lemma \ref{coro-2}, we deduce $\mathfrak{M}_{\mathcal{A}}(U_1) = \mathfrak{M}_{\mathcal{A}}(U_2)$. Suppose $\mathfrak{M}\in\mathbf{M}(\mathcal{A})$ denotes the common matroid $\mathfrak{M}_\mathcal{A}(U)$ for all $U\in\Omega_\mathcal{A}(i,\sigma_i)$. We arrive at $\Omega_\mathcal{A}(i,\sigma_i)\subseteq\Omega_{\mathcal{A}}(\mathfrak{M})$.

Conversely, let $\Omega_{\mathcal{A}}(\mathfrak{M})$ be a $\mathcal{A}$-matroid stratum for some $\mathfrak{M} \in \mathbf{M}(\mathcal{A})$ with rank $r(\mathfrak{M}) = k-i$, and $U \in \Omega_{\mathcal{A}}(\mathfrak{M})$. Then $\dim(U \cap T) = i$ by Lemma \ref{lem-2}. Furthermore, for each subset $I \subseteq [m]$, Lemma \ref{lem-2} implies that
\[
\dim\Big(U\bigcap\big(\bigcap_{i\in I} H_i\big)\Big) = k-\rk_\mathfrak{M}(I).
\]
Therefore, for each maximal chain \(\boldsymbol{F}\colon T = F_0 \subsetneq F_1 \subsetneq \cdots \subsetneq F_r = \mathbb{R}^n\), 
the dimension $\dim(U \cap F_i)$ is determined by the matroid $\mathfrak{M}$ for $i = 1, \ldots, r$. This implies that the $i$-Schubert symbol $\sigma_i$ of $\mathcal{A}$ such that $U \in \Omega_\mathcal{A}(i,\sigma_i)$ is determined by the matroid $\mathfrak{M}$.  Hence, we obtain that all members of $\Omega_{\mathcal{A}}(\mathfrak{M})$ are in the same stratum $\Omega_{\mathcal{A}}(i,\sigma_i)$, i.e., $\Omega_{\mathcal{A}}(\mathfrak{M}) \subseteq \Omega_{\mathcal{A}}(i,\sigma_i)$. This completes the proof.
\end{proof}
\section*{Acknowledgements}
The work is supported by National Natural Science Foundation of China (12301424), and by Guangdong Basic and Applied Basic Research Foundation (2025A1515010457).

\end{document}